\newtheorem{teo}{Theorem}
\newtheorem{lem}{Lemma}
\newtheorem{Rem}{Remark}
\title{Markov semi-groups associated with the complex unimodular group $Sl(2,\mathbb{C})$}
\keywords{Positive definite functions; Intertwining operators; Gelfand pairs; Metaplectic representation, Landau Laplacian.} 
\subjclass[2010]{60B15; 60G51; 42A82; 22E46.}
\begin{document}
\author[N. Demni]{Nizar Demni}
\address{IRMAR, Universit\'e de Rennes 1\\ Campus de
Beaulieu\\ 35042 Rennes cedex\\ France}
\email{nizar.demni@univ-rennes1.fr}
\maketitle

\begin{abstract}
In this paper, we derive the explicit expressions of the Markov semi-groups constructed by P. Biane in \cite{Bia1} from the restriction of a particular positive definite function on the complex unimodular group $SL(2,\mathbb{C})$ to two commutative subalgebras of its universal $C^{\star}$-algebra. Our computations use Euclidean Fourier analysis together with the generating function of Laguerre polynomials with index $-1$, and yield absolutely-convergent double series representations of the semi-group densities. 
We also supply some arguments supporting the coincidence, noticed by Biane as well, occurring between the heat kernel on the Heisenberg group and the semi-group corresponding to the intersection of the principal and the complementary series. To this end, we appeal to the metaplectic representation $Mp(4,\mathbb{R})$ and to the Landau operator in the complex plane. 
\end{abstract} 

\section{Reminder: Intertwining operators arising from Gelfand pairs} 
Two Markov semigroups $(P_t)_{t \geq 0}$ and $(Q_t)_{t \geq 0}$ defined on measurables spaces $(E, \mathscr{E})$ and $(F, \mathscr{F})$ respectively are said to be intertwined by a Markov kernel 
\begin{equation*}
\Lambda: (E, \mathscr{E}) \rightarrow (F, \mathscr{F})
\end{equation*}
if
\begin{equation}\label{Inter}
Q_t \Lambda = \Lambda P_t.
\end{equation}
When \eqref{Inter} holds, it allows for instance to transfer (under regularity assumptions on $\Lambda$) analytical and probabilistic properties of the semigroup $(P_t)_{t \geq 0}$ to $(Q_t)_{t \geq 0}$ as it is the case of the Brownian and the Dunkl semi-groups (see e.g. \cite{CDGRVY}, CH. II). However, proving the existence of an intertwining relation and/or constructing the Markov kernel $\Lambda$ is in general not obvious. For instance, numerous examples of intertwining operators were explicitly computed in \cite{CPY} using the filtering procedure (see also \cite{Hir-Yor} for further developments). In \cite{BBO}, it was shown that the semigroups of the Brownian motion in a finite-dimensional Euclidean space and of the Brownian motion conditioned to stay in the interior of the Weyl chamber of a finite Coxeter group are interwtwined by means of the so-called Duistermaat-Heckman measure. Though this intertwining uses the filtering procedure as well, it relies heavily on the action of the Coxeter group on the underlying Euclidean space which allows in this setting for the construction of the Brownian motion from its conditioned process through the so-called generalized Pitman transforms. In the same vein and as explained in \cite{Bia1}, the existence of a coupling between two Markov processes together with a suitable group action provides an intertwining relation between their semigroups. A noncommutative version of this construction was used in \cite{Bia} to produce a Markov kernel intertwining the so-called noncommutative Bessel semigroup and the heat semi-group on $\mathbb{R}^2$. In that version, the noncommutativity is only concerned with the coupling process which corresponds in this new picture to a completely-positive contraction semigroup $(T_t)_{t \geq 0}$ on a noncommutative $C^{\star}$-algebra $A$. As to the intertwining relation \eqref{Inter}, it holds between the restrictions of $(T_t)_{t \geq 0}$ to two commutative subalgebras $B$ and $C$ of $A$. Moreover, the substitute of the aforementioned group action is now a completely positive projection $\pi: A \rightarrow C$ which intertwines $T_t$ and its restriction to $C$ at any time $t > 0$. Further examples of intertwinings arising from these considerations were further provided and analyzed in \cite{Bia1}. One of them arises from the diagonal subgroup of the complex unimodular group $SL(2,\mathbb{C})$ and from the Gelfand pair $(SL(2, \mathbb{C}), SU(2))$, yet lacked the explicit expressions of the corresponding Markov semigroups and in turn of the intertwining kernel. More precisely, let
\begin{equation*}
T := \left\{\left(\begin{array}{lr}
e^x & 0 \\ 
0 & e^{-x} 
\end{array}\right), \quad x \in \mathbb{R}\right\},
\end{equation*}
be the diagonal subgroup of $SL(2,\mathbb{C})$. Then, by the virtue of the Cartan decomposition, a $SU(2)$-bi-invariant function on $SL(2,\mathbb{C})$ depends only on the real variable $x$. Besides, the map 
\begin{equation*}
\varphi: x \mapsto x\coth(x)-1 = x\frac{1+e^{-2x}}{1-e^{-2x}} - 1
\end{equation*}
defines a $SU(2)$-biinvariant conditionally positive definite function (\cite{BSW}) or equivalently (by Schoenberg Theorem), 
\begin{equation*}
\psi_t: x \mapsto e^{t(1-x\coth(x))} 
\end{equation*} 
is a continuous $SU(2)$-biinvariant positive definite function. The function $\varphi$ is obtained from a suitable limit of positive definite spherical functions along a path in the complementary series tending to the trivial representation. In the representation-theoretical realm, such a function belongs to the so-called Lie cone and corresponds by the GNS construction to an infinitesimal small representation in a neighborhood of the trivial one together with an unbounded cocycle (\cite{Ver-Kar}).

On the other hand, $\phi$ is the L\'evy exponent of a background driving L\'evy process generating the self-decomposable random variable whose characteristic function is $x/\sinh(x)$ (\cite{Jur-Yor}). In particular, the following 
L\'evy-Kintchine formula holds: 
\begin{equation}\label{LevKin}
1-x\coth(x) = \int_{\mathbb{R}\setminus \{0\}} (e^{iux} -1) \frac{\pi}{4} \frac{du}{\sinh^2(\pi u/2)}, 
\end{equation}
whence one deduces the action of the corresponding infinitesimal generator on sufficiently regular functions (see e.g. \cite{App}, Theorem 3.3.3., \cite{BSW}, Corollary 2.9): 
\begin{align*}
\mathscr{L}(f)(u) & = \frac{1}{2\pi} \int_{\mathbb{R}}e^{iux}[1-x\coth(x)]\mathcal{F}(f)(x) dx 
\\& = \frac{\pi}{4}  \int_{\mathbb{R}\setminus \{0\}}(f(u+v) - f(u)) \frac{dv}{\sinh^2(\pi v/2)},
\end{align*}
where $\mathcal{F}$ stands for the Euclidean Fourier transform. A non commutative approach to this L\'evy process stems from the previous considerations. Indeed, let $\psi_t$ act by multiplication on the convolution algebra $L^1(Sl(2, \mathbb{C}))$, then this action defines a completely positive contraction semigroup on the universal $C^{\star}$-algebra and leaves invariant the commutative subalgebras $L^1(T)$. By completing $L^1(T)$ to the universal commutative $C^{\star}$-algebra $C^{\star}(T)$, the L\'evy semi-group $(Q_t)_{t \geq 0}$ associated with $(\psi_t)_{t \geq 0}$ arises from the restriction of the aforementioned action to $C^{\star}(T)$. More concretely, the Gelfand spectrum of $C^{\star}(T)$ is isomorphic to $\mathbb{R}$ through the Euclidean Fourier transform and one has:
\begin{equation*}
\mathcal{F}^{} \left(f\psi_t\right)(-x) =  \int_{\mathbb{R}} \mathscr{F}(f)(u)q_t(x-u)du,
\end{equation*}
for suitable $f \in L^1(T)$.

In \cite{Bia1}, another new Markov semi-group was introduced by Biane using the fact that $(SL(2,\mathbb{C}), SU(2))$ is a Gelfand pair. In this case, the Gelfand spectrum of the convolution algebra of $SU(2)$-biinvariant functions is given by the set of bounded spherical functions: 
\begin{equation*}
x \mapsto \phi_{\omega}(x) = \frac{\sinh(\omega x)}{\omega \sinh(x)}, \quad \omega \in \Omega,
\end{equation*}
where $\Omega = i\mathbb{R} \cup [-1,1]$. In particular, for any $\omega \in \Omega$, one obtains through the Gelfand-Fourier transform a Markov semi-group $(P_t)_{t \geq 0}$ such that: 
\begin{equation}\label{Godement}
\phi_{\omega}(x) \psi_t(x) = \int_{\Omega} \phi_{\xi}(x) P_t(\omega, d\xi).
\end{equation}  
Since $\phi_{\omega}$ is also a positive definite function, then the RHS of \eqref{Godement} is nothing else but the decomposition of the positive definite function $\phi_{\omega} \psi_t$ into extreme ones (see e.g. \cite{God}). 

In this paper, we shall give explicit expressions of semi-groups of $(Q_t)_{t \geq 0}$ and $(P_t)_{t \geq 0}$. Our computations use the euclidean Fourier transform and appeal to the generating function of Laguerre polynomials with index $-1$ which are related to the so-called Lah numbers (\cite{Boy}). By standard arguments from Fourier analysis, $Q_t, t > 0,$ is absolutely continuous with respect to Lebesgue measure on $\mathbb{R}$. As noticed in \cite{Bia1}, the same holds true for $P_t, t > 0,$ when either $\omega \in i\mathbb{R}$ (the principal series) or $\omega \in  [-1,1],  t \leq |w|$ (the complementary series). Otherwise, an extra atom shows up in the Lebesgue decomposition of $P_t(w, d\xi), |w| > 0, 0 < t < |w|$. In the last part of the paper, we supply some arguments supporting the occurrence (noticed also by Biane in the same paper) of $\phi_0\psi_t$ in the subelliptic heat kernel of the Heisenberg group or equivalently in the L\'evy stochastic area formula (\cite{Gav}). To this end, we appeal to the Schr\"odinger operator with a constant magnetic field in the plane (sometimes referred to as the Euclidean Landau laplacian, \cite{AIM}) and its realization by means of the metaplectic representation of $Mp(4, \mathbb{R})$ (\cite{Mat-Uek}). In particular, the heat semi-group of the Landau Laplacian may be interpreted via the GNS construction as an average of a continuous family of unitary representations with respect to the two-dimensional Gaussian distribution. However, we still do not know whether or not these representations occur in the metaplectic representation though the answer seems to be positive. Indeed, it turns out that the radial part of the symplectic matrix representing this Schr\"odinger operator is the tensor product of a $2 \times 2$ matrix with the identity matrix. Since the maximal compact subgroup $O(4) \cap Sp(4,\mathbb{R})$ of the symplectic group may be identified with the unitary group $U(2)$, then we can turn the Cartan decomposition of this matrix in $Sp(4,\mathbb{R})$ into a Cartan decomposition of another one in $Sl(2,\mathbb{C})$. 

The paper is organized as follows. In section 2, we write down the semi-group density of $(Q_t)_{t \geq 0}$ while section 3 contains the (three) different expressions of the kernel $P_t(\omega, d\xi)$ according to the values of the spectral parameter $\omega$. The last section is devoted to the special value $\omega = 0$ in relation with the Heisenberg group and the the metaplectic representation. .

\section{Explicit expression of the L\'evy semigroup $(Q_t)_{t \geq 0}$}  
It is clear from \eqref{LevKin} that $\psi_1$ is the characteristic function of an infinitely divisible distribution on the real line generating the L\'evy semi-group $(Q_t)_{t \geq 0}$ (see Corollary 1 in \cite{Jur-Yor}). Moreover, since $\psi_t$ is integrable for any $t > 0$, then the corresponding semi-group density $q_t$ is smooth (\cite{Kno-Sch}, Theorem 2.1) and may be computed using the Fourier inversion formula: 
\begin{equation*}
q_t(\xi) = \frac{1}{2\pi} \int_{\mathbb{R}}e^{-i\xi x} \psi_t(x) dx = \frac{e^t}{2\pi} \int_{\mathbb{R}}e^{-i\xi x} e^{-tx\coth(x)} dx, \quad \xi \in \mathbb{R}.
\end{equation*}
The following theorem provides a double-series representation for $q_t(\xi)$: 
\begin{teo}\label{Th1}
For any $t > 0$ and any $u \in \mathbb{R}$, 
\begin{align*}
q_t(\xi) = \frac{e^t}{\pi} \sum_{m \geq 0}  \sum_{j \geq 0} \frac{(-2t)^m (m)_j}{j![(2j+2m+t)^2 + \xi^2]^{(m+1)/2}} T_{m+1}\left(\frac{2j+2m+t}{\sqrt{(2j+2m+t)^2+\xi^2}}\right),
\end{align*}
where $T_{m+1}$ is the $(m+1)$-th Tchebycheff polynomial of the first kind (\cite{AAR}): 
\begin{equation*}
T_{m+1}(\cos a) = \cos((m+1)a), \quad a \in \mathbb{R}. 
\end{equation*}
Moreover, the double series is absolutely convergent.  
\end{teo}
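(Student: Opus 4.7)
The strategy is to expand $e^{-tx\coth(x)}$ into a series whose terms have explicitly computable cosine--Laplace transforms, and then to integrate term by term. Using the evenness of $x\mapsto x\coth(x)$, I first rewrite
\begin{equation*}
q_t(\xi) = \frac{e^t}{\pi}\int_0^{\infty}\cos(\xi x)\, e^{-tx\coth(x)}\, dx.
\end{equation*}
On $(0,\infty)$ the elementary identity $x\coth(x)=x+2xe^{-2x}/(1-e^{-2x})$ combined with the generating function of Laguerre polynomials of index $-1$, namely $e^{-yz/(1-z)}=\sum_{n\geq 0} L_n^{(-1)}(y)z^n$ for $|z|<1$, applied with $y=2tx$ and $z=e^{-2x}$, yields
\begin{equation*}
e^{-tx\coth(x)} = \sum_{n\geq 0} L_n^{(-1)}(2tx)\, e^{-(2n+t)x}.
\end{equation*}

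Next I substitute the explicit monomial expansion $L_n^{(-1)}(y)=\sum_{m=1}^{n}\binom{n-1}{m-1}(-y)^m/m!$ (valid for $n\geq 1$, with $L_0^{(-1)}=1$), reindex by $j=n-m\geq 0$, and use $\binom{j+m-1}{m-1}=(m)_j/j!$ together with $(0)_j=\delta_{j,0}$ to absorb the $L_0^{(-1)}$ term into the $m=0$ slot. This produces the compact double expansion
\begin{equation*}
e^{-tx\coth(x)} = \sum_{m\geq 0}\sum_{j\geq 0}\frac{(-2tx)^m (m)_j}{m!\, j!}\, e^{-(2j+2m+t)x}.
\end{equation*}
Granting the interchange of the double sum with the integral, each resulting piece reduces to
\begin{equation*}
\int_0^{\infty}x^m\cos(\xi x)\, e^{-ax}\, dx = \Re\frac{m!}{(a-i\xi)^{m+1}},\qquad a:=2j+2m+t>0.
\end{equation*}
Writing $a-i\xi=\sqrt{a^2+\xi^2}\, e^{-i\theta}$ with $\cos\theta=a/\sqrt{a^2+\xi^2}$ and invoking $T_{m+1}(\cos\theta)=\cos((m+1)\theta)$ expresses this real part as $m!\, T_{m+1}(a/\sqrt{a^2+\xi^2})/(a^2+\xi^2)^{(m+1)/2}$; the factor $m!$ cancels with the one in the denominator of the coefficient and one recovers precisely the formula stated.

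The principal obstacle, which I would address last, is to justify both the termwise integration and the absolute convergence of the final double series. Using $|T_{m+1}|\leq 1$ together with the crude lower bound $(a^2+\xi^2)^{(m+1)/2}\geq a^{m+1}$, each summand is dominated in absolute value by $(2t)^m (m)_j/[j!\, (2j+2m+t)^{m+1}]$. Since $(m)_j/j!\sim j^{m-1}/\Gamma(m)$ as $j\to\infty$, the inner sum over $j$ converges at the rate of $\sum_{j} j^{-2}$ uniformly in $m$, while the outer $m$-sum is dominated by a series of the form $\sum_m t^m/\Gamma(m)$; the finitely many small $(m,j)$ contributions are handled by direct inspection. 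This bound validates the Fubini swap a posteriori and simultaneously legitimizes the rearrangement of the single Laguerre series into the double $(m,j)$-series, completing the proof.
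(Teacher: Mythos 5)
Your proof is correct and follows essentially the same route as the paper: the Laguerre-$(-1)$ generating function applied to $e^{-tx\coth(x)}$, termwise cosine--Laplace transforms producing the Chebyshev factors, and domination of the $(m,j)$-series by $(2t)^m(m)_j/[j!\,(2j+2m+t)^{m+1}]$. The only cosmetic differences are that you evaluate $\int_0^\infty x^m\cos(\xi x)e^{-ax}\,dx$ directly as $\Re\bigl(m!/(a-i\xi)^{m+1}\bigr)$ where the paper cites Gradshteyn--Ryzhik, and you control the dominating series with the crude bound $(m)_j/j!\le (j+m)^{m-1}/\Gamma(m)$ (which should be stated as an explicit inequality rather than an asymptotic to get uniformity in $m$), where the paper evaluates the inner sum exactly via a Gamma integral and arrives at $\int_0^\infty\bigl(e^{tv/(e^v-1)}-1\bigr)\,dv<\infty$.
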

\begin{proof} 
Expand 
\begin{equation}\label{GenFun}
e^{-tx\coth(x)} = e^{-tx} \sum_{j \geq 0} L_j^{(-1)}(2tx)e^{-2jx}, x \geq 0, 
\end{equation}
where 
\begin{equation}\label{Laguerre}
L_j^{(-1)}(x) = \frac{1}{j!} \sum_{m=0}^j (-1)^m \binom{j}{m} (m)_{j-m} x^m,  \quad j \geq 0,
\end{equation}
are the Laguerre polynomials of index $-1$ (\cite{AAR}) and for a real number $a$,
\begin{equation*}
(a)_j = a(a+1)\cdots(a+j-1), 
\end{equation*}
is the Pochhammer symbol with the convention $(0)_0 = 1$, $(0)_j = 0, j \geq 1$. Note that there is no constant term in these polynomials provided that $j \geq 1$ and that they are connected to Lah numbers (\cite{Boy}). 
Now, the following bound (see e.g. \cite{Lew-Szy}, p.530):
\begin{equation}\label{bound}
|L_j^{(-1)}(2tx)| \leq \frac{C}{j^{1/4}} e^{tx},
\end{equation}
where $C$ is an absolute constant, allows to apply Fubini Theorem to compute
\begin{align*}
 \int_{\mathbb{R}}e^{-i\xi x} e^{-tx\coth(x)} dx & = 2\int_{0}^{\infty} \cos(\xi x)  \sum_{j \geq 0} L_j^{(-1)}(2x)e^{-(2j+t)x} dx 
\\& = 2\sum_{j \geq 0}  \int_{0}^{\infty}\cos(\xi x) e^{-(2j+t)x}L_j^{(-1)}(2tx) dx
\\& =  2\sum_{j \geq 0} \sum_{m=0}^j \frac{(-2t)^m}{(j-m)!m!}(m)_{j-m}  \int_{0}^{\infty} \cos(\xi x) e^{- (2j+t)x}x^{m} dx.
\end{align*}
Using formula 3.944. 6., in \cite{Gra-Ryz}, we further get: 
\begin{align*}
 \int_{\mathbb{R}}e^{-i\xi x} e^{-tx\coth(x)} dx & =  2\sum_{j \geq 0} \sum_{m=0}^j \frac{(-2t)^m}{(j-m)!}\frac{(m)_{j-m}}{[(2j+t)^2 + \xi^2]^{(m+1)/2}} \cos\left\{(m+1)\arctan\left(\frac{\xi}{2j+t}\right)\right\}
\\& = 2 \sum_{j \geq 0} \sum_{m=0}^j \frac{(-2t)^m}{(j-m)!}\frac{(m)_{j-m}}{[(2j+t)^2 + \xi^2]^{(m+1)/2}} T_{m+1}\left\{\cos\arctan\left(\frac{\xi}{2j+t}\right)\right\},
\end{align*}
which together with the identity
\begin{equation*}
\cos(y) = \frac{1}{\sqrt{1+\tan^2(y)}}, \quad y \in [-\frac{\pi}{2}, \frac{\pi}{2}], 
\end{equation*}
yield
\begin{align}\label{DS}
q_t(\xi) = \frac{e^t}{\pi} \sum_{j \geq 0}\sum_{m=0}^j \frac{(-2t)^m}{(j-m)!}\frac{(m)_{j-m}}{[(2j+t)^2 + \xi^2]^{(m+1)/2}} T_{m+1}\left(\frac{2j+t}{\sqrt{(2j+t)^2+\xi^2}}\right).
\end{align}
Finally, use the bound
\begin{equation*}
(2j+t+2m)^2 \geq 4(j+m)^2,
\end{equation*} 
together with the Gamma integral: 
\begin{equation*}
\frac{1}{(j+m)^{m+1}} = \frac{1}{m!}\int_0^{\infty} v^m e^{-(j+m)v} dv 
\end{equation*}
to see that 
\begin{align*}
\sum_{j \geq 1} \sum_{m=1}^j\frac{(2t)^m}{(j-m)!}\frac{(m)_{j-m}}{[(2j+t)^2 + \xi^2]^{(m+1)/2}} & = \sum_{m \geq 1} \sum_{j \geq m} \frac{(2t)^m}{(j-m)!}\frac{(m)_{j-m}}{[(2j+t)^2 + \xi^2]^{(m+1)/2}}
\\& = \sum_{m \geq 1} \sum_{j \geq 0} \frac{(2t)^m}{j!}\frac{(m)_{j}}{[(2j+2m+t)^2 + \xi^2]^{(m+1)/2}}
\\& \leq \sum_{m \geq 1}t^m \sum_{j \geq 0} \frac{(m)_j}{j!} \frac{1}{(j+m)^{m+1}}
\\& = \int_0^{\infty}\sum_{m \geq 1} \frac{(tv)^m}{m!(e^{v}-1)^m} dv  
\\& = \int_0^{\infty}\left(e^{tv/(e^v-1)} - 1\right) dv \quad < \infty
\end{align*}
uniformly in $\xi$. As a matter of fact, the double series in the RHS of \eqref{DS} converges absolutely and the sought expression for $q_t$ follows after inverting the summation order in \eqref{DS}.
\end{proof} 

\begin{Rem}
Recall from \eqref{LevKin} that the L\'evy measure of the semi-group $(q_t)_{t \geq 0}$ is given by: 
\begin{equation*}
\frac{\pi}{4} \frac{du}{\sinh^2(\pi u/2)} du, \quad u \neq 0. 
\end{equation*}
Recall also from Lemma 2.16 in \cite{BSW} that this measure is uniquely determined by the vague limit: 
\begin{equation*}
\lim_{t \rightarrow 0^+} \frac{1}{t}\int_{\mathbb{R} \setminus \{0\}} f(\xi)q_t(\xi) d\xi = \frac{\pi}{4} \int_{\mathbb{R} \setminus \{0\}} f(\xi) \frac{d\xi}{\sinh^2(\pi \xi /2)},
\end{equation*}
for compactly-supported functions $f$ in $\mathbb{R} \setminus \{0\}$. Since $(0)_j = \delta_{j0}$ and since 
\begin{equation*}
T_1(u) = u, \quad T_2(u) = 2u^2-1,
\end{equation*}
then quick computations show: 
\begin{equation*}
\lim_{t \rightarrow 0^+} \frac{1}{t} q_t(\xi) = \frac{1}{\pi \xi^2} - \frac{1}{2\pi}\sum_{j \geq 1}\frac{j^2 - (\xi/2)^2}{[(j^2+(\xi/2)^2]^2}. 
\end{equation*}
Consequently, the Lebesgue convergence Theorem leads to the following identity: 
\begin{equation*}
 \frac{1}{\sinh^2(\pi \xi /2)} = \frac{4}{\pi^2 \xi^2} - \frac{2}{\pi^2}\sum_{j \geq 1}\frac{j^2 - (\xi/2)^2}{[(j^2+(\xi/2)^2]^2}, 
\end{equation*}
which may be derived from 
\begin{equation*}
\pi \cot(\pi x) = \frac{1}{x} + \sum_{j \geq 1}\frac{2x}{x^2-j^2}
\end{equation*}
after complexification and differentiation. 
\end{Rem}

\begin{Rem} 
The Tchebycheff polynomial may be expanded as (\cite{AAR}): 
\begin{equation*}
T_{m+1}(y) = \sum_{k=0}^{[(m+1)/2]} a_{k,m} y^{m+1-2k},
\end{equation*}
for some real coefficients $a_{k,m}$. Plugging this expansion in \eqref{DS}, we get
\begin{align*}
q_t(\xi) = \frac{e^t}{\pi} \sum_{j \geq 0}\sum_{m=0}^j \frac{(-2t)^m(m)_{j-m}}{(j-m)!} \sum_{k=0}^{[(m+1)/2]} a_{k,m} \frac{(2j+t)^{m+1-2k}}{[(2j+t)^2 + \xi^2]^{m+1-k}}.
\end{align*}
On the other hand, the semi-group density of the Cauchy process in $\mathbb{R}^d, d \geq 1,$ starting at the origin reads (\cite{App}): 
\begin{equation*}
\mathscr{C}_{s,d}(y) = \frac{1}{\pi^{(d+1)/2}} \Gamma\left(\frac{d+1}{2}\right)\frac{s}{(s^2+|y|^2)^{(d+1)/2}}, \quad y \in \mathbb{R}^d,  s > 0. 
 \end{equation*}
Hence, $q_t$ may be seen as a superposition of radial parts of Cauchy semi-group densities with varying even dimensions. 
\end{Rem}

\begin{Rem}
Let $(R_t)_{t \geq 0}$ be a Bessel process of dimension $\delta > 0$ and starting at $R_0 = 0$. In \cite{Gho}, Theorem 2.1, the author derives an explicit expression for the density $f_a^{\delta}$ of the conditional distribution of (see also \cite{Pit-Yor}):
\begin{equation*}
\int_0^1R_s^2 ds 
\end{equation*}
given $R_1 = a$. In particular, the Laplace transform of this random integral at $x^2/2$ is given by 
\begin{equation*}
\left(\frac{x}{\sinh(x)}\right)^{\delta/2} e^{(a^2/2)(1-x\coth(x))}, 
\end{equation*}
which tends to $ \psi_{a^2/2}(x)$ as  $\delta \rightarrow 0^+$. However, 
\begin{equation*}
\lim_{\delta \rightarrow 0^+} f_a^{\delta} \neq q_{a^2/2}, 
\end{equation*} 
which may be checked directly from the double series representation of $f_a^{\delta}$. 
\end{Rem}

\section{Explicit expression of the kernel $P_t(w, d\xi), w \in \Omega$}
The Lebesgue decomposition of the kernel $P_t(w, d\xi)$ depends on the decay of $\phi_{\omega} \psi_t$ at infinity and more precisely on the square-integrability of this function with respect to the radial part of Haar measure. Since the latter is given by 
$\sinh^2(x) dx$, we distinguish the three cases according to whether $\omega$ belongs to the principal series or to the complementary one with either $t \geq |\omega|$ or $t < |\omega|, \omega \neq 0$.  
\subsection{The principal series} 
Let $\omega \in i\mathbb{R}$, then $\phi_{\omega}\psi_t$ is square integrable with respect to the Haar measure. As a matter of fact, the decomposition \eqref{Godement} is in this case only over the principal series since the complementary one does not appear in the decomposition of the left regular representation into irreducible ones (see e.g. \cite{Ber}, \cite{GGS}). In this respect, we shall prove the following: 
\begin{teo}\label{Th2}
Let $\omega \in \mathbb{R}$ and $x > 0$, then 
\begin{equation*}
\phi_{i\omega}(x) e^{t(1-x\coth(x))} = \int_{\mathbb{R}} \phi_{i\xi}(x) \frac{q_t(\xi-\omega) - q_t(\xi+\omega)}{2\omega} \xi d\xi. 
\end{equation*}
In particular, $P_t(\omega, d\xi)$ is absolutely continuous with respect to Lebesgue measure $d\xi$. 
 \end{teo}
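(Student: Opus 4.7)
The plan is to reduce the claimed identity to a pure Euclidean Fourier calculation by substituting $\phi_{i\omega}(x) = \sin(\omega x)/(\omega \sinh x)$ and $\phi_{i\xi}(x) = \sin(\xi x)/(\xi \sinh x)$ on both sides. After multiplying through by $2\omega \sinh(x)$ and cancelling the factor $\xi$ in the integrand on the right, the theorem collapses to the scalar identity
$$2\sin(\omega x)\,\psi_t(x) = \int_{\mathbb{R}} \sin(\xi x)\bigl[q_t(\xi - \omega) - q_t(\xi + \omega)\bigr]\,d\xi,\qquad x > 0.$$

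Next, I would exploit that $\psi_t$ is real and even, hence $q_t$ is also real and even, and Fourier inversion gives $\psi_t(x) = \int_{\mathbb{R}} \cos(\xi x)q_t(\xi)\,d\xi$ while $\int_{\mathbb{R}}\sin(\xi x)q_t(\xi)\,d\xi = 0$. Performing the translations $\xi \mapsto u \pm \omega$ and expanding $\sin((u \pm \omega)x) = \sin(ux)\cos(\omega x) \pm \cos(ux)\sin(\omega x)$, the two integrals on the right evaluate respectively to $+\sin(\omega x)\psi_t(x)$ and $-\sin(\omega x)\psi_t(x)$, whose difference is exactly $2\sin(\omega x)\psi_t(x)$. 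This closes the main identity.

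Absolute continuity is then immediate: by Godement's uniqueness theorem for the decomposition of the positive definite function $\phi_{i\omega}\psi_t$ into extreme ones (cf.\ \eqref{Godement}), the identity just verified forces
$$P_t(\omega, d\xi) = \frac{\xi}{2\omega}\bigl[q_t(\xi - \omega) - q_t(\xi + \omega)\bigr]\,d\xi,$$
which is manifestly absolutely continuous with respect to $d\xi$. Nonnegativity of this density and total mass one are free of charge since $P_t(\omega,\cdot)$ is a priori a probability measure. The apparent singularity at $\omega = 0$ is harmless, as the difference quotient converges to $-q_t'(\xi)$, matching $\phi_{i\omega}(x) \to x/\sinh(x)$ on the left.

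I do not anticipate a serious obstacle: the only technical point is the swap justifying $\int \sin(\xi x) q_t(\xi - \omega)\,d\xi$ as a translated Fourier coefficient of $q_t$, which follows at once from $q_t \in L^1(\mathbb{R})$ and $|\sin(\xi x)|\le 1$. The conceptual content is that, along the principal series $\omega \in i\mathbb{R}$, the spherical functions are pure sines up to the common conjugation factor $1/\sinh(x)$, so the Gelfand--Fourier decomposition on $(SL(2,\mathbb{C}), SU(2))$ reduces in this case to an odd Euclidean Fourier decomposition against $q_t$.
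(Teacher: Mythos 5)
Your argument is correct and lands on exactly the identity the paper proves, but the mechanics are genuinely different. The paper writes $\sin(\omega x)/\omega$ as $x/(2\omega)$ times the Fourier transform of the indicator $\mathbf{1}_{[-\omega,\omega]}$, absorbs $\psi_t$ by the convolution theorem, and then removes the stray factor $x$ by an integration by parts, for which it must first check $\lim_{u\to\pm\infty}q_t(u)=0$ and compute $\partial_{\xi}\bigl[q_t\star\mathbf{1}_{[-\omega,\omega]}\bigr](\xi)=q_t(\xi+\omega)-q_t(\xi-\omega)$. You instead invoke Fourier inversion $\psi_t(x)=\int\cos(\xi x)q_t(\xi)\,d\xi$, use the evenness of $q_t$ to kill the odd moment, and let the addition formula for $\sin((u\pm\omega)x)$ produce the two terms $\pm\sin(\omega x)\psi_t(x)$; this needs only $q_t\in L^1$, avoids the boundary terms and the decay estimate, and treats $\omega=0$ by the same difference-quotient limit the paper uses. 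What the paper's version buys is the convolution/derivative form of the density, which it reuses verbatim in Section 4. One caveat, common to your write-up and the paper's: identifying the computed density with $P_t(\omega,d\xi)$ rests on uniqueness of the Bochner--Godement decomposition (equivalently, on the spherical Plancherel theorem applied to the $L^2$ function $\phi_{i\omega}\psi_t$), and that uniqueness is a statement about \emph{positive} measures; declaring nonnegativity ``free of charge since $P_t(\omega,\cdot)$ is a probability measure'' is mildly circular, though it is the same gap the paper leaves implicit rather than a defect introduced by your proof.
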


\begin{proof}
Since $\phi_{i\omega} = \phi_{-i\omega}$, then we shall assume $w \in \mathbb{R}_+$. Then 
\begin{equation*}
\frac{\sin(\omega x)}{\omega} = \frac{x}{2\omega} \int e^{ix\xi} {\bf 1}_{[-\omega,\omega]}(\xi)d\xi, 
\end{equation*}
where for $\omega=0$, one uses the weak limit 
\begin{equation*}
\lim_{\omega \rightarrow 0^+} \frac{1}{2\omega}{\bf 1}_{[-\omega,\omega]} = \delta_0. 
\end{equation*}
It follows that 
\begin{align*}
\frac{\sin(\omega x)}{\omega} e^{t(1-x\coth(x))} & = \frac{x}{2\omega}\int e^{ix\xi} \left[q_t \star {\bf 1}_{[-\omega,\omega]}\right](\xi) d\xi,
\\& = \frac{x}{2\omega}\int_{\mathbb{R}} \cos(x\xi) \left[q_t \star {\bf 1}_{[-\omega,\omega]}\right](\xi) d\xi,
\end{align*}
where 
\begin{equation*}
\left[q_t \star {\bf 1}_{[-\omega,\omega]}\right](\xi) = \int_{\mathbb{R}} q_t(\xi-u){\bf 1}_{[-\omega,\omega]}(u) du =  \int_{-\omega}^\omega q_t(\xi-u) du.
\end{equation*}
Now, it is straightforward from the double series representation of $q_t$ (or from the Riemann-Lebesgue Lemma) that 
\begin{equation*}
\lim_{u \rightarrow \pm \infty} q_t(u) = 0, 
\end{equation*}
so that an integration by parts yields: 
\begin{align*}
\frac{\sin(\omega x)}{\omega} e^{t(1-x\coth(x))} & = -\frac{1}{2\omega}\int_{\mathbb{R}} \sin(x\xi) \partial_{\xi} \left[q_t \star {\bf 1}_{[-\omega,\omega]}\right](\xi) d\xi.
\end{align*}
But 
\begin{align*}
\partial_{\xi} \left[q_t \star {\bf 1}_{[-\omega,\omega]}\right](\xi) &= \partial_{\xi} \int_{-\omega}^{\omega} q_t(\xi - y) dy 
\\& = \int_{-\omega}^{\omega} -\partial_y(q_t(\xi - \cdot))(y) dy 
\\& = q_t(\xi+\omega) - q_t(\xi-\omega).
\end{align*}
As a result, 
\begin{align*}
\frac{\sin(\omega x)}{\omega} e^{t(1-x\coth(x))} = \int_{\mathbb{R}}  \frac{\sin(x\xi)}{\xi} \frac{q_t(\xi-\omega) - q_t(\xi+\omega)}{2\omega} \xi d\xi,
\end{align*}
as desired.
\end{proof}

\begin{Rem}
A slightly different form of the previous result is:  
\begin{equation*}
\phi_{i\omega}(x) e^{t(1-x\coth(x))} = \int_{\mathbb{R}} \phi_{i\xi}(x) \frac{q_t(\xi-\omega) - q_t(\xi+\omega)}{2\omega \xi} \xi^2 d\xi. 
\end{equation*}
Since $\xi^2 d\xi$ is the Plancherel measure of the group $Sl(2,C)$ (see for instance \cite{Koor}, p. 8  and use the fact that $Sl(2,C)$ is isomorphic to $SO(3,1)$), Theorem \ref{Th2} gives the inverse spherical Fourier transform of $\phi_{\omega}\psi_t$ when $\omega$ belongs to the principal series (\cite{Koor}, p. 9). Moreover, since $q_t$ is even then the kernel 
\begin{equation*}
\frac{q_t(\xi-\omega) - q_t(\xi+\omega)}{2\omega \xi}
\end{equation*}
is so in both variables $(\omega, \xi)$. 
\end{Rem}

\begin{Rem}
The Markov property of the classical Markov process associated with the semi-group density $P_t(\omega, d\xi)$ may be deduced in this case directly from that of the L\'evy process. Indeed, it is equivalent to the Chapman-Kolmogorov equation: for any $\xi, \omega, \gamma, \in \mathbb{R}$,
\begin{equation*}
\gamma \int_{\mathbb{R}} \frac{q_t(\xi-\omega) - q_t(\xi+\omega)}{2\omega}\frac{q_s(\gamma-\xi) - q_s(\gamma + \xi)}{2\xi} \xi d\xi = \gamma \frac{q_{t+s}(\gamma-\omega) - q_{t+s}(\gamma+\omega)}{2\omega}.
\end{equation*}
\end{Rem}

\begin{Rem}
The fact that $P_t(\omega, d\xi)$ is a probability measure is equivalent to the harmonicity of the identity function with respect to $Q_t$: 
\begin{equation*}
\omega =  \int_{\mathbb{R}}\frac{q_t(\xi-\omega) - q_t(\xi+\omega)}{2} \xi d\xi = \int_{\mathbb{R}} q_t(\omega-\xi) \xi d\xi.
 \end{equation*}
In this respect, the action of the semi-group $(P_t)_{t \geq 0}$ on even functions in $C_0(\Omega)$ may be written as:  
\begin{equation*}
P_t(f)(\omega) = \int_{\mathbb{R}} f(\xi)\frac{\xi}{\omega} q_t(\xi-\omega)d\xi,
\end{equation*}
which allows to think of the corresponding Markov process as a Doob-tranfsorm of the L\'evy process. Furthermore, we can see by direct computations that the intertwining operator is given by the kernel: 
\begin{align*}
(\omega, x) \mapsto \int_{\mathbb{R}} \phi_{i\omega}(u) e^{iux} du &= \frac{\pi}{\omega}  \frac{\sinh(\pi \omega)}{\cosh(\pi \omega) + \cosh(\pi x)} 
\end{align*}
where the equality follows from formula 3.986. 2, in \cite{Gra-Ryz}. 
\end{Rem}

\subsection{The complementary series}
Now, let $\omega \in (-1,1)$ be in the complementary series and assume $t \geq |\omega|$. Then, $\phi_{\omega}\psi_t$ is still square integrable therefore we have a similar decomposition as in the previous theorem: 
\begin{equation*}
\phi_{\omega}(x)\psi_t(x) = \int_{\mathbb{R}} \frac{\sin(\xi x)}{\xi\sinh(x)} P_t(\omega, d\xi),
\end{equation*}
or equivalently 
\begin{equation*}
\frac{\sinh(\omega x)}{\omega} \psi_t(x) = \int_{\mathbb{R}} \frac{\sin(\xi x)}{\xi} P_t(\omega, d\xi).
\end{equation*}
More precisely, 
\begin{teo}\label{Th3}
Let $\omega \in (-1,1)$ and $t \geq |w|$. Then 
\begin{equation*}
\phi_{\omega}(x)\psi_t(x)  = \frac{e^t}{2\pi} \int_{\mathbb{R}} \phi_{i\xi}(x)  \frac{I_t^{-} (\omega, \xi) - I_t^{+} (\omega, \xi)}{\omega\xi} \xi^2 d\xi,
\end{equation*}
where $I_t^{\mp} (\omega, \xi)$ are absolutely convergent double series defined below.  
\end{teo}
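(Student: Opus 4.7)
The plan is to mimic the reduction carried out in Theorem \ref{Th2}. Multiplying the claimed identity through by $\sinh(x)/e^t$ shows that it amounts to the Fourier-sine inversion
\[
g_t^{\omega}(x)\;:=\;\frac{\sinh(\omega x)}{\omega}\,e^{-tx\coth(x)}\;=\;\frac{1}{2\pi}\int_{\mathbb{R}}\sin(\xi x)\,\frac{I_t^{-}(\omega,\xi)-I_t^{+}(\omega,\xi)}{\omega}\,d\xi,
\]
so it suffices to compute the Fourier-sine transform of the odd, Schwartz-class function $g_t^{\omega}$ and then to invoke the inversion theorem. First I would expand $g_t^{\omega}$ using the Laguerre generating function \eqref{GenFun} together with $\sinh(\omega x)=(e^{\omega x}-e^{-\omega x})/2$, obtaining for $x>0$
\[
g_t^{\omega}(x)\;=\;\frac{1}{2\omega}\sum_{j\geq 0}L_j^{(-1)}(2tx)\bigl[e^{-(2j+t-\omega)x}-e^{-(2j+t+\omega)x}\bigr].
\]
Under the hypothesis $t>|\omega|$ (the boundary case $t=|\omega|$ to be recovered afterwards by continuity in $t$ of both sides), both rates $2j+t\mp\omega$ are strictly positive and the bound \eqref{bound} provides a summable majorant, so Fubini justifies exchanging the sum with the integration against $\sin(\xi x)\,dx$.

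Inserting \eqref{Laguerre} and evaluating each elementary integral by the sine analogue of formula 3.944.6 of \cite{Gra-Ryz},
\[
\int_{0}^{\infty}x^{m}e^{-ax}\sin(\xi x)\,dx\;=\;\frac{m!}{(a^{2}+\xi^{2})^{(m+1)/2}}\sin\!\bigl((m+1)\arctan(\xi/a)\bigr),\quad a>0,
\]
one is naturally led to the definition
\[
I_t^{\mp}(\omega,\xi)\;:=\;\sum_{j\geq 0}\sum_{m=0}^{j}\frac{(-2t)^{m}(m)_{j-m}}{(j-m)!}\,\frac{\sin\!\bigl((m+1)\arctan(\xi/(2j+t\mp\omega))\bigr)}{\bigl[(2j+t\mp\omega)^{2}+\xi^{2}\bigr]^{(m+1)/2}}
\]
(equivalently rewritable in terms of the Chebyshev polynomial $U_{m}$ of the second kind), together with the identity $\int_{0}^{\infty}g_t^{\omega}(x)\sin(\xi x)\,dx=[I_t^{-}(\omega,\xi)-I_t^{+}(\omega,\xi)]/(2\omega)$. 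The absolute convergence of the double series $I_t^{\mp}$ is obtained verbatim from the closing estimate of Theorem \ref{Th1}: bound $|\sin(\cdot)|\leq 1$, invert the summation order, use $(2j+t\mp\omega+2m)^{2}\geq 4(j+m)^{2}$ for $j+m$ large enough (which holds thanks to $|\omega|<1$), and dominate the tail by the Gamma integral $\int_{0}^{\infty}(e^{tv/(e^{v}-1)}-1)\,dv<\infty$ uniformly in $\xi$.

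Since $g_t^{\omega}$ is smooth on $\mathbb{R}$ and, when $t>|\omega|$, exponentially decaying at infinity, it belongs to the Schwartz class; its Fourier transform is thus also Schwartz and in particular integrable in $\xi$. The Fourier inversion theorem applied to the odd function $g_t^{\omega}$ therefore yields the reduced identity above, and multiplying back by $e^{t}/\sinh(x)$ produces the claim. I expect the main technical obstacle to lie in the boundary case $t=|\omega|$, where the $j=0$ summand in either $I_t^{-}$ or $I_t^{+}$ loses its exponential decay and the corresponding sine integral becomes only conditionally convergent; handling this cleanly will likely proceed through a continuity-in-$t$ argument on both sides of the identity, or by an Abel-type regularization of that boundary summand at the Fourier level.
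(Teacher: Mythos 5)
Your proposal is correct and follows essentially the same route as the paper: expand via the Laguerre generating function \eqref{GenFun}, split $\sinh(\omega x)$ into exponentials, justify the interchange of sum and integral with the bound \eqref{bound}, evaluate each term with formula 3.944.5 of \cite{Gra-Ryz}, and reorder the double sum as in Theorem \ref{Th1}, so that your $I_t^{\mp}$ coincide with the paper's. The only cosmetic difference is that the paper first differentiates the Godement identity in $x$ and inverts the Fourier transform of $\partial_x[\sinh(\omega x)\psi_t(x)/\omega]$ --- a function that remains integrable even at $t=|\omega|$ --- which is precisely the integration by parts hiding behind the boundary issue you correctly flag at the end.
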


\begin{proof}
Since $\phi_{\omega}(x) = \phi_{\omega}(-x)$ and from the uniqueness of the integral representation \eqref{Godement}, we deduce that the probability measure $P(\cdot, d\xi)$ is symmetric. It is also absolutely continuous with respect to Lebesgue measure on $\mathbb{R}$: indeed,  
\begin{equation*}
\partial_x  \left(\frac{\sinh(\omega\cdot)}{\omega}\psi_t\right)(x) = \int_{\mathbb{R}}e^{i\xi x}P_t(\omega, d\xi)
\end{equation*}
and the LHS of this equality is integrable as a function of the variable $x$. Hence, for any $\omega \in [0,1]$ and any $t \geq \omega$,
\begin{align*}
P_t(\omega,\xi) & = \frac{1}{2\pi} \int_{\mathbb{R}} e^{-ix\xi} \partial_x \left(\frac{\sinh(\omega\cdot)}{\omega}\psi_t\right)(x)dx
\\& = i \frac{\xi}{2\pi \omega} \int_{\mathbb{R}} e^{-ix\xi} \sinh(\omega x) \psi_t(x) dx
\\& = \xi \frac{e^t}{2\pi \omega} \int_{0}^{\infty} \sin(x\xi) (e^{-(t-\omega)x} - e^{-(t+\omega)x})\sum_{j \geq 0} L_j^{(-1)}(2tx)e^{-2jx}dx.
\end{align*}
Using the bound \eqref{bound}, we can split the above integral into the difference of the following series: 
\begin{equation*}
I_t^{\mp} (\omega, \xi) :=  \sum_{j \geq 0} \int_{0}^{\infty} \sin(x\xi) L_j^{(-1)}(2tx)e^{-(2j+t\mp \omega)x}dx, 
\end{equation*}
which, by the virtue of \eqref{Laguerre} together with formula 3.944. 5., in \cite{Gra-Ryz}, may be expanded as 
\begin{align*}
I_t^{\mp} (\omega, \xi) & = \sum_{j \geq 0}\sum_{m=0}^j  (m)_{j-m}\frac{(-2t)^m }{(j-m)!m!} \int_{\mathbb{R}} x^m \sin(x\xi) e^{-(2j+t\mp \omega)x} dx
\\& = \sum_{j \geq 0}\sum_{m=0}^j  (m)_{j-m}\frac{(-2t)^m }{(j-m)![(2j+t\mp \omega)^2+\xi^2]^{(m+1)/2}}\sin\left\{(m+1)\arctan\left(\frac{\xi}{2j+t\mp \omega}\right)\right\}.
\end{align*}
Moreover, the same arguments used in the proof of Theorem \ref{Th1} show that we can intertwine the summation order in such a way that 
\begin{align*}
I_t^{\mp} (\omega, \xi) & = \sum_{m\geq 0}\sum_{j \geq 0}  (m)_{j}\frac{(-2t)^m }{j![(2j+2m+t\mp \omega)^2+\xi^2]^{(m+1)/2}}\sin\left\{(m+1)\arctan\left(\frac{\xi}{2j+2m+t\mp \omega}\right)\right\}.
\end{align*}
Since $\phi_{\omega} = \phi_{-\omega}$ and $I_t^{-} (\omega, \xi) = I_t^{+} (-\omega, \xi)$, then the proof is complete.
\end{proof}

Finally, we deal with the case $t < |\omega|, \omega \in (-1,1) \setminus \{0\}$. Then, the spherical function $\phi_{\omega}$ decays as $e^{(|w|-t-1)x}, x \rightarrow +\infty$ so that a residual part shows up in the decomposition \eqref{Godement}. 
Actually, straightforward computations show that the map
\begin{equation*}
x \mapsto \phi_{\omega}(x)\psi_t(x) -  e^t\frac{|\omega|-t}{|\omega|}\phi_{|\omega|-t}(x) = \frac{e^t}{|\omega|\sinh(x)} [\sinh(|\omega|x)e^{-tx\coth(x)} - \sinh((|\omega|-t)x)]
\end{equation*}
is square integrable with respect to the radial part of the Haar measure\footnote{There is missed factor $e^t$ in \cite{Bia1}.}. As a matter of fact, it may be decomposed as: 
\begin{equation}\label{Dec1}
\phi_{\omega}(x)\psi_t(x) - e^t\frac{|\omega|-t}{|\omega|}\phi_{|\omega|-t}(x) = \int_{\mathbb{R}} \phi_{i\xi}(x)G_t(\omega, d\xi),
\end{equation}
for some (submarmovian) kernel $G_t(\omega, d\xi)$ and the latter may be computed along similar, yet more complicated, lines as those written in the proof of theorem \ref{Th3}. For sake of simplicity, we shall assume (without loss of generality) that $\omega \in (0,1)$ and prove that: 
\begin{teo}
The kernel of $G_t(\omega, d\xi)$ is absolutely continuous with respect to Lebsegue measure: 
\begin{equation*}
G_t(\omega, d\xi) = \xi \frac{e^t}{2\pi \omega} \left[J_t(\omega, \xi) - I_t^+(\omega, \xi) + \frac{1}{\sqrt{\xi^2+(\omega-t)^2)}}\sin\left(\arctan\left(\frac{\xi}{\omega-t}\right)\right)\right] d\xi,
\end{equation*}
where $J_t(\omega, \xi)$ is is displayed in \eqref{DoubleSer} below. 
\end{teo}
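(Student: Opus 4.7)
The plan is to mirror the proof of Theorem \ref{Th3}, the only new ingredient being a careful extraction of the $j=0$ term of the Laguerre expansion, which is precisely the contribution that the subtracted residual $(\omega-t)/\omega \, \phi_{\omega-t}$ is designed to absorb.

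First I would set $F_t(\omega, x) := \phi_{\omega}(x)\psi_t(x) - e^t\tfrac{\omega-t}{\omega}\phi_{\omega-t}(x)$, so that
\[
F_t(\omega, x)\sinh(x) = \frac{e^t}{\omega}\bigl[\sinh(\omega x)e^{-tx\coth(x)} - \sinh((\omega-t)x)\bigr].
\]
This product is odd in $x$ and, by the square-integrability observation made just above \eqref{Dec1} (with the $e^t$ correction), lies in $L^2(\mathbb{R})$. As in Theorem \ref{Th3}, this justifies differentiating under the Fourier-inversion integral and yields, for the density $g_t(\omega,\xi)$ of $G_t(\omega,d\xi)$,
\[
g_t(\omega,\xi) = \frac{\xi e^t}{\pi\omega}\int_0^{\infty}\sin(\xi x)\bigl[\sinh(\omega x)e^{-tx\coth(x)} - \sinh((\omega-t)x)\bigr]dx.
\]

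Next I would substitute the generating-function identity \eqref{GenFun} into $\sinh(\omega x)e^{-tx\coth(x)}$ and isolate the $j=0$ term, obtaining
\[
\sinh(\omega x)e^{-tx\coth(x)} = \tfrac{1}{2}\bigl[e^{(\omega-t)x} - e^{-(\omega+t)x}\bigr] + \tfrac{1}{2}\sum_{j\geq 1}L_j^{(-1)}(2tx)\bigl[e^{-(2j+t-\omega)x} - e^{-(2j+t+\omega)x}\bigr].
\]
The divergent piece $\tfrac12 e^{(\omega-t)x}$ (responsible for the failure of square-integrability when $t<\omega$) cancels exactly with the corresponding piece of $-\sinh((\omega-t)x) = -\tfrac12 e^{(\omega-t)x} + \tfrac12 e^{-(\omega-t)x}$; the leftover $\tfrac12 e^{-(\omega-t)x}$ is the residual that will produce the arctan term of the theorem. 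After cancellation, one integrates against $\sin(\xi x)$ term by term.

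The residual integral is elementary:
\[
\tfrac12\int_0^{\infty}\sin(\xi x)e^{-(\omega-t)x}dx = \frac{\xi}{2((\omega-t)^2+\xi^2)} = \frac{1}{2\sqrt{\xi^2+(\omega-t)^2}}\sin\!\left(\arctan\!\left(\frac{\xi}{\omega-t}\right)\right),
\]
using $\sin(\arctan y) = y/\sqrt{1+y^2}$ and $\omega-t>0$. The remaining $j=0$ piece $-\tfrac12 e^{-(\omega+t)x}$ combined with the $e^{-(2j+t+\omega)x}$ contributions from $j\geq 1$ reassembles precisely $-\tfrac12 I_t^+(\omega,\xi)$ as defined in the proof of Theorem \ref{Th3}; the $e^{-(2j+t-\omega)x}$ contributions from $j\geq 1$ are the definition of $J_t(\omega,\xi)$, which I would expand via \eqref{Laguerre} and formula 3.944.5 of \cite{Gra-Ryz} to obtain the double series \eqref{DoubleSer}. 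Pulling out the common factor $\tfrac12$ gives exactly the announced formula.

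Finally I would check absolute convergence of $J_t(\omega,\xi)$ and justify the swap in summation order. This step is a verbatim replay of the argument in Theorems \ref{Th1} and \ref{Th3}: for $j\geq 1$ and $\omega\in(0,1)$ we have $2j+t-\omega \geq 1+t > 0$, so the bound \eqref{bound} yields $|L_j^{(-1)}(2tx)e^{-(2j+t-\omega)x}|\leq C j^{-1/4} e^{-(2j-\omega)x}$, giving a summable and $x$-integrable majorant. The only real bookkeeping obstacle is ensuring the $j=0$ cancellation in Step 2 is carried out exactly, and rewriting the leftover $\xi/((\omega-t)^2+\xi^2)$ in the arctan form used in the statement; modulo this, the argument is a straightforward adaptation of Theorem \ref{Th3}.
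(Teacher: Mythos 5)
Your proposal is correct and follows essentially the same route as the paper: Fourier inversion of the derivative of the residually-corrected function, extraction of the $j=0$ term of the Laguerre expansion (the paper does this by writing $e^{-tx\coth(x)}-e^{-tx}=\sum_{j\geq 1}L_j^{(-1)}(2tx)e^{-(2j+t)x}$, you by expanding $\sinh(\omega x)$ into exponentials first, which is the same algebra regrouped), cancellation of the growing exponential against $\sinh((\omega-t)x)$, and termwise integration via formula 3.944.5 with the bound \eqref{bound} justifying the interchange. No gaps.
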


\begin{proof}
We start by writing \eqref{Dec1} as 
\begin{equation*}
\frac{e^t}{\omega} \partial_x \left[\sinh(\omega x)e^{-tx\coth(x)} - \sinh((\omega-t)x)\right] = \int_{\mathbb{R}} e^{ix\xi} G_t(\omega, d\xi),
\end{equation*}
and noting that the derivative of the function 
\begin{equation*}
x \mapsto \sinh(\omega x)e^{-tx\coth(x)} - \sinh((\omega-t)x)
\end{equation*}
is integrable. Consequently, 
\begin{align*}
G_t(\omega, d\xi) & = \frac{e^t}{2\pi \omega} \int_{\mathbb{R}} e^{-ix\xi} \partial_x \left[\sinh(\omega x)e^{-tx\coth(x)} - \sinh((\omega-t)x)\right] dx 
\\& = i\xi \frac{e^t}{2\pi \omega} \int_{\mathbb{R}} e^{-ix\xi} \left[\sinh(\omega x)e^{-tx\coth(x)} - \sinh((\omega-t)x)\right] dx
\\& = \xi \frac{e^t}{\pi \omega} \int_{\mathbb{R}} \sin(x\xi) \left[\sinh(\omega x)e^{-tx\coth(x)} - \sinh((\omega-t)x)\right] dx. 
\end{align*}
Now, split
\begin{align*}
\sinh(\omega x)e^{-tx\coth(x))} - \sinh((\omega-t)x) = \frac{e^{\omega x}}{2}[e^{-tx\coth(x)} - e^{-tx}] - \frac{1}{2}[e^{-(\omega+t\coth(x))x} - e^{-(\omega-t)x}], 
\end{align*}
and use the generating function \eqref{GenFun} to write: 
\begin{equation*}
e^{-tx\coth(x)} - e^{-tx} = \sum_{j \geq 1} L_j^{(-1)}(2tx)e^{-(2j+t)x}. 
\end{equation*} 
Together with the estimate \eqref{bound} lead to the following absolutely-convergent integrals: 
\begin{align*}
J_t(\omega, \xi) & := \int_0^{\infty}\sin(x\xi)\left\{\sum_{j \geq 1} L_j^{(-1)}(2tx)e^{-(2j+t-\omega)x}\right\} dx
\\& = \sum_{j \geq 1}\int_0^{\infty}\sin(x\xi)L_j^{(-1)}(2tx)e^{-(2j+t-\omega)x} dx 
\\& = \sum_{j\geq 1} \sum_{m=0}^j  (m)_{j-m}\frac{(-2t)^m }{(j-m)!m!}\int_0^{\infty}x^m \sin(x\xi)e^{-(2j+t-\omega)x} dx,
\end{align*}
and 
\begin{align*}
K_t(\omega, \xi) &:= \int_0^{\infty}\sin(x\xi) \left\{e^{-(\omega+t)x}\sum_{j \geq 0} L_j^{(-1)}(2tx)e^{-2jx}  - e^{-(\omega-t)x}\right\}dx
\\& =  \sum_{j \geq 0}\int_0^{\infty}\sin(x\xi)e^{-(\omega+t+2j)x} L_j^{(-1)}(2tx) dx -  \int_0^{\infty}\sin(x\xi)e^{-(\omega-t)x}dx
\\& = I_t^{+} (\omega, \xi) - \int_0^{\infty}\sin(x\xi)e^{-(\omega-t)x}dx.
\end{align*} 
Finally, formula 3.944. 5., in \cite{Gra-Ryz} provides the expressions: 
\begin{equation}\label{DoubleSer}
J_t(\omega, \xi) =  \sum_{j \geq 1}\sum_{m=0}^j  (m)_{j-m}\frac{(-2t)^m }{(j-m)![(2j+t- \omega)^2+\xi^2]^{(m+1)/2}}\sin\left\{(m+1)\arctan\left(\frac{\xi}{2j+t- \omega}\right)\right\}, 
\end{equation}
and 
\begin{equation*}
\int_0^{\infty}\sin(x\xi)e^{-(\omega-t)x}dx = \frac{1}{\sqrt{\xi^2+(\omega-t)^2)}}\sin\left(\arctan\left(\frac{\xi}{\omega-t}\right)\right), 
\end{equation*}
which finish the proof. 
\end{proof}

\section{$\omega=0$: the L\'evy stochastic area}
For $w=0$, Theorem \ref{Th2} specializes to 
\begin{align*}
\phi_0(x) \psi_t(x) = \frac{x}{\sinh(x)}\psi_t(x) & = \lim_{\omega \rightarrow 0} \int_{\mathbb{R}} \phi_{i\xi}(x) \frac{q_t(\xi-\omega) - q_t(\xi+\omega)}{2\omega \xi} \xi^2 d\xi
\\& = \frac{1}{\sinh(x)}\int_{\mathbb{R}} \sin(\xi x) \lim_{\omega \rightarrow 0}\frac{q_t(\xi-\omega) - q_t(\xi+\omega)}{2\omega} d\xi
\\& = \frac{1}{\sinh(x)}\int_{\mathbb{R}} \sin(\xi x) \partial_\xi q_t(\xi)d\xi.
\end{align*}
This result may be directly derived from an integration by parts, namely:  
\begin{equation*}
x \psi_t(x) = \int_{\mathbb{R}} x \cos(\xi x) q_t(\xi)d\xi = -\int_{\mathbb{R}} \sin(\xi x) \partial_\xi q_t(\xi) d\xi.
\end{equation*}
On the other hand, $\phi_0\psi_t$ is also the Euclidean Fourier transform of the L\'evy stochastic area at unit time conditional on the planar Brownian motion defining it being on the circle of radius $\sqrt{2t}$ (\cite{Gav}, Lemma 2). In \cite{Bia1}, the author asks for an explanation of this coincidence and we supply in the sequel some arguments supporting it. Of course, this is far from being a definitive answer. 

Our key observation is that the L\'evy stochastic area rather encodes the heat kernel of the so-called Landau Laplacian in the plane (the Schr\"odinger operator in the plane with a perpendicular constant magnetic field, \cite{Hel}): 
\begin{equation*}
H_b := -\frac{1}{2}\left[(\partial_x + ib y)^2 + (\partial_y - ibx)^2\right] 
\end{equation*}
where $b > 0$ is the strength of the magnetic field. This fact is not surprising and stems from the fact that this Hamiltonian may be obtained from the sublaplacian of the Heisenberg group after performing a partial Fourier transform with respect to the vertical variable. Now, $H_b$ may be realized as the image of a matrix $\mathcal{A}$ in the Lie algebra $sp(4,\mathbb{R})$ by the derived metaplectic representation $Mp(4,\mathbb{R})$ (see \cite{Mat-Uek} for further details). Indeed, let $\alpha:= b/(2\pi)$ and consider the matrix 
$\mathcal{A} \in sp(4,\mathbb{R})$: 
\begin{equation*}
\mathcal{A}_{\alpha}  = \left(\begin{array}{lr}
\alpha A & I_2 \\
-\alpha^2I_2 & \alpha A 
\end{array}\right), \qquad  A := \left(\begin{array}{lr}
0 & -1 \\
1 &  0
\end{array}\right),
\end{equation*}
where $I_2$ is the $2 \times 2$ identity matrix. Then, the spectrum of $H_b$ is the countable set $\{b(2m+1), m \geq 0\}$ and with $\pi$ denoting the metaplectic representation, we have
\begin{equation*}
H_b = \frac{d}{dt} (\pi [e^{-t\mathcal{A}_{\alpha}}])_{t=0} \equiv d\pi(\mathcal{A}_{\alpha})
\end{equation*} 
where $d\pi$ is the derived representation. Besides, its heat semi-group at unit time is given by (see e.g. \cite{Hel}, \cite{Mat-Uek}, eq. (6.1)): 
\begin{equation}\label{Subor}
\pi [e^{-\mathcal{A}_{\alpha}}](f)(0) = e^{-d\pi(A_{\alpha})} (f)(0) = \int_{\mathbb{R}^2}\phi_0(b) \psi_{|y|^2/2}(b) f(y)e^{-|y|^2/2} \frac{dy}{2\pi}.  
\end{equation}
for any square integrable function $f$. Thus, if $(\rho_y, v_y, \mathcal{H}_y), y \in \mathbb{R}^2,$ denotes the unitary representation of $Sl(2,\mathbb{C})$ associated with the positive definite function $\phi_0\psi_{|y|^2/2}$ via the GNS construction\footnote{$\mathcal{H}_y$ is the representation Hilbert space endowed with an inner product $\langle \cdot, \cdot \rangle_{\mathcal{H}_y}$ and $v_y \in \mathcal{H}_y $ is a cyclic $SU(2)$-invariant unit vector.}, then \eqref{Subor} may be written as an average of the continuous family of representations $(\rho_y, v_y, \mathcal{H}_y)_{y \in \mathbb{R}^2}$ with respect to the Gaussian distribution $e^{-|y|^2/2}/(2\pi)$:
\begin{equation*}
\pi [e^{-\mathcal{A}_{\alpha}}](f)(0) = \int_{\mathbb{R}^2} \langle \rho_y(\textrm{diag}(e^b, e^{-b})) v_y, v_y \rangle_{\mathcal{H}_y} e^{-|y|^2/2} \frac{dy}{2\pi}.
\end{equation*}
In this respect, it seems like the representations $(\rho_y)_{y \in \mathbb{R}^2}$ occur in the decomposition of $\pi$ into irreducible ones. 

This possible guess is indeed strengthened by the fact that the Cartan decomposition of $e^{-\mathcal{A}_{\alpha}}$ is somehow special in the sense that it may be turned into a Cartan decomposition in $Sl(2,\mathbb{C})$. 
More precisely, straightforward computations show that $\mathcal{A}_{\alpha}^2 = 2\alpha \mathcal{A}\mathcal{J}_{\alpha}$ where
\begin{equation*}
\mathcal{J}_{\alpha} := \left(\begin{array}{lr}
0_2 & (1/\alpha)I_2 \\
-\alpha I_2 & 0_2 
\end{array}\right) 
\end{equation*}
 satisfies $\mathcal{J}_{\alpha}^2 = -I_4$. As a result, one gets for any $j \geq 1$, 
\begin{equation*}
\mathcal{A}_{\alpha}^{2j} = (-1)^{j-1} (2\alpha)^{2j-1} \mathcal{A}_{\alpha} \mathcal{J}_{\alpha},  \quad \mathcal{A}_{\alpha}^{2j-1} = (-1)^{j-1} (2\alpha)^{2j-2} \mathcal{A}_{\alpha},
\end{equation*}
whence it follows that for any $t \geq 0$,
\begin{align*}
e^{-t\mathcal{A}_{\alpha}} & = I_4 + \frac{1-\cos (2\alpha t)}{2\alpha} \mathcal{A}_{\alpha}\mathcal{J}_{\alpha} + \frac{\sin(2\alpha t)}{2\alpha} \mathcal{A}_{\alpha} 
\\& = I_4 + \frac{\sin^2(\alpha t)}{\alpha} \mathcal{A}_{\alpha}\mathcal{J}_{\alpha} +\frac{\sin(\alpha t)\cos(\alpha t)}{\alpha}\mathcal{A}_{\alpha}. 
\end{align*}
or in block form:
\begin{equation*} 
e^{-t\mathcal{A}_{\alpha}}  = \left(\begin{array}{lr}
\cos^2(\alpha t) I_2 + \sin(\alpha t)\cos(\alpha t)A & [\sin^2(\alpha t) A+ \sin(\alpha t)\cos(\alpha t) I_2]/\alpha  \\
- \alpha [\sin(\alpha t)\cos(\alpha t) I_2 +\sin^2(\alpha t) A]  & \cos^2(\alpha t) I_2 + \sin(\alpha t)\cos(\alpha t)A
\end{array}\right). 
\end{equation*}
Besides, setting
\begin{eqnarray*}
U_{\alpha}(t)  &:= & \cos^2(\alpha t) I_2 + \sin(\alpha t)\cos(\alpha t)A \\ 
V_{\alpha}(t) &:= & \sin^2(\alpha t) A+ \sin(\alpha t)\cos(\alpha t) I_2,
\end{eqnarray*}
and noting that $A^2 = -I_2$, we readily get the relations
\begin{eqnarray*}
U_{\alpha}(t)U_{\alpha}^T(t) & = & \cos^2(\alpha t) I_2 \\ 
V_{\alpha}(t) V_{\alpha}^T(t) & = & \sin^2(\alpha t) I_2,  \\
U_{\alpha}(t)V_{\alpha}(t)^T &= & \sin(\alpha t)\cos(\alpha t) I_2 = U_{\alpha}(t)^TV_{\alpha}(t),
\end{eqnarray*}
which in turn show that:
\begin{equation*}
e^{-t\mathcal{A}_{\alpha}}e^{-t\mathcal{A}_{\alpha}^T} = \left(\begin{array}{lr}
\cos^2(\alpha t) + \displaystyle \frac{\sin^2(\alpha t)}{\alpha^2}  & \sin(\alpha t)\cos(\alpha t) \displaystyle \left(\frac{1}{\alpha} - \alpha\right)  \\
\sin(\alpha t)\cos(\alpha t) \displaystyle \left(\frac{1}{\alpha} - \alpha\right)  &  \cos^2(\alpha t) + \alpha^2 \sin^2(\alpha t)
\end{array}\right)  \quad \otimes \quad I_2,
\end{equation*}
and similarly 
\begin{equation*}
e^{-t\mathcal{A}_{\alpha}^T}e^{-t\mathcal{A}_{\alpha}} = \left(\begin{array}{lr}
\cos^2(\alpha t) + \alpha^2 \sin^2(\alpha t) & \sin(\alpha t)\cos(\alpha t) \displaystyle \left(\frac{1}{\alpha} - \alpha\right)  \\
\sin(\alpha t)\cos(\alpha t) \displaystyle \left(\frac{1}{\alpha} - \alpha\right)  & \cos^2(\alpha t) + \displaystyle \frac{\sin^2(\alpha t)}{\alpha^2}  
\end{array}\right)  \quad \otimes \quad I_2.
\end{equation*}
Consequently, the Cartan decomposition of $e^{-\mathcal{A}_{\alpha}}$ admits the block form: 
\begin{equation*}
e^{-\mathcal{A}_{\alpha}} = M_{\alpha}
\left(\begin{array}{lr} \lambda_1 I_2  & \\ & \lambda_2I_2 \end{array}\right) R_{\alpha}, 
\end{equation*}
where $M_{\alpha}, R_{\alpha} \in O(4) \cap Sp(4,\mathbb{R})$ are orthogonal symplectic matrices and $\lambda_1, \lambda_2 > 0, \lambda_1 \lambda_2 = 1$. Since the subgroup $O(4) \cap Sp(4,\mathbb{R}) \subset SO(4)$ consists of $2 \times 2$ block matrices 
\begin{equation*}
\left(\begin{array}{lr} 
G  & F \\ -F & G 
\end{array}\right),
\end{equation*}
then it may be identified with $U(2)$ through the map $(G,F) \mapsto G + iF$. As a matter of fact, we may assign to $e^{-\mathcal{A}_{\alpha}} $ the following matrix in $SL(2, \mathbb{C})$:  
\begin{equation*}
M_{\alpha}\left(\begin{array}{lr} \lambda_1  & \\ & 1/\lambda_1 \end{array}\right) R_{\alpha}. 
\end{equation*}

{\bf Acknowledgments}: we would like to thank Jacques Faraut, Philippe Biane, Bachir Bekka and Ren\'e Schilling for stimulating discussions and remarks.


\begin{thebibliography}{99}
\bibitem{AAR}\emph{G. E. Andrews, R. Askey, R. Roy}. Special functions. {\it Cambridge University Press, Camdridge}, 1999. xvi+664 pp.
\bibitem{App}\emph{D. Applebaum}. L\'evy Processes and Stochastic calculus. {\it Second Edition, Cambridge Studies in Advanced Mathematics} {\bf 116}, {\it Cambridge University Press, Camdridge}. 2009. xxx+460 pp. 
\bibitem{AIM}\emph{N. Askour, A. Intissar and Z. Mouayn}. Espaces de Bargmann g\'{e}n\'{e}ralis\'{e}s et formules explicites pour leurs noyaux reproduisants, {\it C. R. A. S. Paris}, t.\textbf{325}, S\'{e}rie I, pp.707-712 (1997).
\bibitem{Ber}\emph{J. N. Bernstein}. On the support of Plancherel measure. {\it J. Geom. Physics}. {\bf Vol. 5}, no. 4, (1988), 663-710. 
\bibitem{Bia}\emph{P. Biane}. Intertwining of Markov semi-groups, some examples. {\it S\'eminaire de probabilit\'es de Strasbourg}, {\bf 29}, (1995) , p. 30-36.
\bibitem{Bia1}\emph{P. Biane.} Entrelacements de semi-groupes provenant de paires de Gelfand. {\it ESAIM Probab. Stat}. {\bf 15} (2011), In honor of Marc Yor, suppl., S2-S10.
\bibitem{BBO}\emph{P. Biane, P. Bougerol, and N. O'Connell}. Continuous crystal and Duistermaat-Heckman measure for Coxeter groups. {\it Adv. Math}. (2009), {\bf 221}, 1522-1583.
\bibitem{BSW}\emph{B. B\"ottcher, R. Schilling, J. Wang}. L\'evy Matters III. L\'evy-type processes: construction, approximation and simple path properties. With a short biography of Paul L\'evy by Jean Jacod. {\it Lecture notes in Mathematics}, 2099. 
L\'evy Matters. Springer, Cham, 2013. xvi+199 pp. 
\bibitem{Boy}\emph{K. N. Boyadzhiev}. Lah numbers, Laguerre polynomials of order negative one, and the nth derivative of $\exp(1/x)$. {\it Acta Univ. Sapientiae Math}. {\bf 8} (2016), no. 1, 22-31.
\bibitem{CPY}\emph{P. Carmona, F. Petit, M. Yor}. Beta-gamma random variables and intertwining relations between certain Markov processes. {\it Rev. Mat. Iberoamericana}. {\bf 14}, (1998), no.2, 311-367.
\bibitem{CDGRVY} \emph{O. Chybiryakov, N. Demni, L. Gallardo, M. R\"osler, M. Voit, M. Yor}. Harmonic and Stochastic Analysis of Dunkl Processes. {\it Travaux en Cours, Hermann}. 2008. 
\bibitem{Erd2}\emph{A. Erdelyi, W. Magnus, F. Oberhettinger, F. G. Tricomi}. Higher Transcendental Functions. {\bf Vol. II.} McGraw-Hill Book Company, Inc., New York-Toronto-London, 1954.
\bibitem{Far}\emph{J. Faraut}. Analyse harmonique sur les paires de Gelfand et les espaces hyperboliques. {\it Cours du CIMPA}, 1980. 
\bibitem{Gav}\emph{B. Gaveau}. Principe de moindre action, propagation de la chaleur et estim\'ees sous-elliptiques sur certains groupes nilpotents. {\it Acta Math}. {\bf 139} (1977), no. 1-2, 95-153.
\bibitem{GGS}\emph{I. M. Gelfand, M. I. Graev, I. I. Pyatetskii-Shapiro}. Representation Theory and Automorphic Functions. {\it Translated from the Russian by K. A. Hirsch. Reprint of the 1969 edition. Generalized Functions, 6. Academic Press, Inc., Boston, MA}, 1990. xviii+426 pp.
\bibitem{Gho}\emph{R. Ghomrasni}. On distributions associated with the generalized L\'evy's stochastic area formula. {\it Studia Sci. Math. Hungar}. {\bf 41} (2004), no. 1, 93-100. 
\bibitem{God}\emph{R. Godement}. A theory of spherical functions I. {\it Trans. Amer. Math. Soc}. {\bf 73}, no. 3, (1952), 496-556. 
\bibitem{Gra-Ryz} \emph{I.S. Gradshteyn, I.M. Ryzhik}, Table of integrals, series and products, {\it Seventh Edition., Academic Press, Boston, MA}, 1994.
\bibitem{Hel}\emph{K. Helmes}. On L\'evy's area process. {\it Stochastic differential systems (Bad Honnef, 1985), 187-194, Lect. Notes Control Inf. Sci., 78, Springer, Berlin}, 1986. 
\bibitem{Hir-Yor}\emph{F. Hirsch, M. Yor}. Fractional intertwinings between two markov semigroups. {\it Potential Anal}. (2009) {\bf 31},133-146.
\bibitem{Jur-Yor}\emph{Z. J. Jurek, M. Yor}. Selfdecomposable laws associated with hyperbolic functions. {\it Probab. Math. Statist}. {\bf 24} (2004), no. 1, Acta Univ. Wratislav. No. 2646, 181-190. 
\bibitem{Kno-Sch}\emph{V. Knopova, R. Schilling}. A note on the existence of transition probability densities. {\it Forum Math.} {\bf 25} (2013), no. 1, 125-149. 
\bibitem{Koor}\emph{T. Koornwinder}. Jacobi functions and analysis on non compact semi simple Lie groups. {\it Special functions: group theoretical aspects and applications}, 1-85, {\it Math. Appl., Reidel, Dordrecht}, 1984. 
\bibitem{Lew-Szy}\emph{Z. Lewandowski, J. Szynal}. An upper bound for the Laguerre polynomials. {\it Proceedings of the VIIIth Symposium on Orthogonal Polynomials and Their Applications (Seville, 1997). 
J. Comput. Appl. Math.} {\bf 99} (1998), no. 1-2, 529-533. 
\bibitem{Mat-Uek}\emph{H. Matsumoto, N. Ueki}. Applications of the theory of the metaplectic representation to quadratic Hamiltonians on the two dimensional euclidian space. {\it J. Math. Soc. Japan}. {\bf 52}, No.2, 2000.
\bibitem{Pit-Yor}\emph{J. Pitman, M. Yor}. A decomposition of Bessel bridges. {\it  Z. Wahrsch. Verw. Gebiete}. {\bf 59} (1982), no. 4, 425-457. 
\bibitem{Ver-Kar}\emph{A. M. Vershik, S. I. Karpushev}. Cohomology of groups in unitary representations, the neighborhood of the identity, and conditionally positive definite functions. {\it Math. USSR-Sbornik}. {\bf 47}, no.2, 1984. 513-526.
\end{thebibliography}
\end{document}